\theoremstyle{plain}
\newtheorem{theorem}{Theorem}
\newtheorem{lemma}[theorem]{Lemma}
\newtheorem{claim}[theorem]{Claim}
\theoremstyle{definition}
\newtheorem{remark}[theorem]{Remark}
\newcommand{\cond}{\textnormal{cond}}
\newcommand{\eps}{\varepsilon}
\newcommand{\R}{\mathbb{R}}
\newcommand{\Cplx}{\mathbb{C}}
\newcommand{\A}{\mathcal{A}}
\newcommand{\Ainv}{\mathcal{A}^{-1}}
\newcommand{\B}{\mathcal{B}}
\newcommand{\C}{\mathcal{C}}
\newcommand{\I}
{\mathcal{I}}
\newcommand{\K}
{\mathcal{K}}
\newcommand{\RR}{\mathcal{R}}
\newcommand{\SSS}{\mathcal{S}}
\newcommand{\set}[1]{\left\{#1\right\}}
\newcommand{\size}[1]{\left|#1\right|}
\newcommand{\abs}[1]{\left|#1\right|}
\newcommand{\norm}[1]{\left\|#1\right\|}
\begin{document}

\title{An application of kissing number in sum-product estimates}

\author{J\'{o}zsef Solymosi}
\address{\noindent Department of Mathematics, University of British Columbia, 1984 Mathematics Road, Vancouver, BC, V6T 1Z2, Canada}
\email{solymosi@math.ubc.ca}
\thanks{The first author was supported by NSERC, ERC-AdG. 321104, and OTKA NK 104183 grants}

\author{Ching Wong}
\address{\noindent Department of Mathematics, University of British Columbia, 1984 Mathematics Road, Vancouver, BC, V6T 1Z2, Canada} 
\email{ching@math.ubc.ca}

\date{}

\begin{abstract}
The boundedness of the kissing numbers of convex bodies has been known to Hadwiger \cite{HAD} for long. We present an application of it to the sum-product estimate
$$\max(\size{\A+\A},\size{\A\A})\gg\dfrac{\size{\A}^{4/3}}{\lceil\log\size{\A}\rceil^{1/3}}$$
 for finite sets $\A$ of quaternions and of a certain family of well-conditioned matrices.
\end{abstract}

\maketitle

\section{Introduction}

\subsection{Kissing number}
The $n$-th kissing number $c_n$ is the maximum possible number of non-overlapping unit spheres simultaneously touching another given unit sphere in $\R^n$. It is clear that $c_1=2$ and $c_2=6$. The first complete proof of $c_3=12$ was due to Sch\"{u}tte and van der Waerden \cite{SCHVAN}. Musin found the exact value of $c_4$, which is the current largest known kissing number.

\begin{theorem}[Musin \cite{MUS}]\label{Musin}
$c_4=24$.
\end{theorem}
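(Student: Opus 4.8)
The plan is to prove the two matching bounds $c_4\ge 24$ and $c_4\le 24$. A configuration of non-overlapping unit spheres touching the central unit sphere in $\R^4$ is the same thing as a finite set $X=\set{x_1,\dots,x_N}\subset S^3$ of unit vectors (the touching points) with $\langle x_i,x_j\rangle\le\tfrac12$ whenever $i\neq j$: the centres of the touching spheres are $2x_i$, and non-overlapping says $\norm{2x_i-2x_j}\ge 2$, i.e.\ the angular distance between $x_i$ and $x_j$ is at least $\pi/3$. For the lower bound it is then enough to exhibit $24$ such vectors, and the $24$ minimal vectors of the $D_4$ root lattice --- the vectors obtained from $(\pm1,\pm1,0,0)$ by permuting coordinates, equivalently the vertices of the regular $24$-cell after normalisation --- do the job, since their pairwise inner products lie in $\set{-1,-\tfrac12,0,\tfrac12}$. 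Hence $c_4\ge 24$.

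The upper bound is the substance. The classical tool is the Delsarte--Goethals--Seidel linear-programming bound on $S^3$: the Gegenbauer (ultraspherical) polynomials $G_k^{(4)}$, normalised so that $G_0^{(4)}\equiv 1$ and proportional to the Chebyshev polynomials of the second kind, are positive definite, meaning $\sum_{i,j}G_k^{(4)}(\langle x_i,x_j\rangle)\ge 0$ for every finite $X\subset S^3$ and every $k\ge 0$. Consequently, if $f=\sum_{k\ge 0}c_kG_k^{(4)}$ has $c_0>0$, $c_k\ge 0$ for $k\ge 1$, and $f(t)\le 0$ on $[-1,\tfrac12]$, then evaluating $\sum_{i,j}f(\langle x_i,x_j\rangle)$ two ways --- once through the Gegenbauer expansion, once by separating the diagonal --- yields $c_0N^2\le Nf(1)$, so $N\le f(1)/c_0$. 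Optimising over such $f$, however, gives only $c_4\le 25.5585\ldots$, hence $c_4\le 25$, which is off by one.

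Following Musin, I would relax the sign requirement on $f$: instead of demanding $f(t)\le 0$ throughout $[-1,\tfrac12]$, one permits $f$ to be positive on a short subinterval, keeping only $f(t)\le 0$ for $t\in[-1,t_0]$ with a carefully chosen $t_0\in(-1,\tfrac12)$. Re-running the two-way computation and, for each fixed $i$, splitting the neighbours $x_j$ into those with $\langle x_i,x_j\rangle\le t_0$ (which still contribute $\le 0$) and the remaining ones, one arrives at
\[
c_0N\le f(1)+\mu(t_0),
\]
where $\mu(t_0)$ is the maximum of $\sum_{y\in Y}f(\langle e,y\rangle)$ over all reference points $e\in S^3$ and all $\pi/3$-separated finite sets $Y\subset S^3$ with $\langle e,y\rangle\in(t_0,\tfrac12]$ for every $y\in Y$. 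It then suffices to produce one admissible $f$ --- Musin works with one of degree $9$ --- together with a value of $t_0$ for which $(f(1)+\mu(t_0))/c_0\le 24$; combined with the construction above this forces $c_4=24$.

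The main obstacle is obtaining a sharp enough upper estimate for $\mu(t_0)$, which is a genuine nonlinear optimisation over spherical configurations rather than a linear program. I would attack it in two stages. First, since such a $Y$ lies in the spherical cap $\set{y\in S^3:\langle e,y\rangle>t_0}$ and is $\pi/3$-separated, a separate small packing / linear-programming argument bounds $\size{Y}$ by an explicit constant and, more, constrains the admissible mutual positions enough to bring in spherical codes on $2$-spheres and reduce the problem to low dimension. Second, for each of the finitely many admissible cardinalities one is left with a low-dimensional problem that can be pushed to an explicit numerical bound using the monotonicity of $f$ on $(t_0,\tfrac12]$ together with the angular constraints. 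One then selects $t_0$ to make the cap, hence $\mu(t_0)$, as small as the trade-off with the relaxed constraint on $f$ permits, solves the outer linear program for $f$ numerically, and certifies the resulting $f$ with exact rational coefficients (so that all conditions --- nonnegativity of the Gegenbauer coefficients, the sign on $[-1,t_0]$, and the $\mu$-bound --- hold rigorously), which completes the proof that $c_4\le 24$ and therefore $c_4=24$.
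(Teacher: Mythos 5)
You have set out to prove a statement that the paper itself does not prove: Theorem~\ref{Musin} is quoted from Musin \cite{MUS} and used as a black box (only through its corollary, Lemma~\ref{lemmakissing}, where the constant $25=c_4+1$ is what matters). So the comparison has to be with Musin's actual argument, and against that your sketch has a real problem beyond incompleteness. Your lower bound is fine (the $24$ normalised minimal vectors of $D_4$ have pairwise inner products in $\set{-1,-\tfrac12,0,\tfrac12}$), and you correctly note that the plain Delsarte--Goethals--Seidel bound only gives $c_4\le 25$. But you have put Musin's relaxation on the wrong side: in his proof the test function is required to satisfy $f(t)\le 0$ on $[-t_0,\tfrac12]$ and is allowed to be positive only near $t=-1$, so that for a fixed $e=x_i$ the ``exceptional'' points are those nearly antipodal to $e$; they lie in a small cap about $-e$, a cap of angular radius well below $60^\circ$, which can contain only a handful of $\pi/3$-separated points, and it is exactly this that makes the residual nonlinear optimisation finite and tractable. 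In your version ($f\le 0$ on $[-1,t_0]$, positivity permitted on $(t_0,\tfrac12]$) the exceptional points fill a wide collar just outside the $60^\circ$ cap around $e$, where a $\pi/3$-separated set can have many elements; your two-stage plan for bounding $\mu(t_0)$ gives no reason to believe the resulting quantity can be pushed below what is needed for $24$, and no known proof runs this way.

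Even leaving the sign issue aside, what you have written is a strategy, not a proof: the explicit polynomial (you mention ``degree $9$'' but do not produce it), the choice of $t_0$, the rigorous bound on the local maximum, and the final verification that $\bigl(f(1)+\mu(t_0)\bigr)/c_0\le 24$ are precisely where all the work in Musin's paper lies, and you defer them to unspecified numerics ``certified with exact rational coefficients.'' As it stands the upper bound $c_4\le 24$ is asserted, not established. If your goal is only to justify the paper's use of the constant $25$ in Lemma~\ref{lemmakissing}, note that citing Musin is the intended route; a self-contained proof would require carrying out the local analysis in full, with the relaxation on the antipodal side.
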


More generally, Hadwiger considered a notion of kissing number of general convex bodies. For a convex body $\K$, we denote by $H(\K)$ the maximum possible number of non-overlapping translates of $\K$ all touching $\K$. See \cite{ZON} for more variations of kissing numbers of convex bodies.

\begin{theorem}[Hadwiger \cite{HAD}]\label{Hadwiger}
If $\K$ is a $d$-dimensional convex body, then $H(\K)\leq3^d-1$.

In particular, if $\K$ is a closed ball in any topology induced by a norm defined on $\Cplx^{d+1}$, then $H(\K)\leq3^d-1$.
\end{theorem}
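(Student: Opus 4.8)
The strategy is to convert the translative kissing problem for $\K$ into a sphere–packing problem in a single auxiliary normed space attached to $\K$, and then finish with a one–line volume comparison. Since $\K$ is a $d$–dimensional convex body, its \emph{difference body} $D:=\K-\K=\set{a-b:a,b\in\K}$ is a compact, centrally symmetric ($D=-D$) convex body with $0$ in its interior (a small ball contained in $\K$, translated to the origin, sits inside $D$). Hence $D$ is the closed unit ball of a bona fide norm $\norm{\cdot}$ on $\R^{d}$, namely its Minkowski functional; because $\K$ is bounded this functional is definite, and because $0\in\mathrm{int}\,D$ it is continuous, with $\set{x:\norm{x}<1}=\mathrm{int}\,D$ and $\set{x:\norm{x}\le1}=D$.

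Next I would record the dictionary between the hypotheses and $\norm{\cdot}$: for $t\in\R^{d}$ one has $\K\cap(\K+t)\ne\emptyset\iff t\in D\iff\norm{t}\le1$ (immediate from the definition of $D$), and $\mathrm{int}(\K)\cap\mathrm{int}(\K+t)\ne\emptyset\iff\norm{t}<1$ (using $\mathrm{int}(\K)-\mathrm{int}(\K)=\mathrm{int}(\K-\K)=\set{x:\norm{x}<1}$). Thus, if $\K+t_{1},\dots,\K+t_{N}$ are pairwise non-overlapping translates of $\K$ each touching $\K$, then each $t_{i}$ satisfies $\norm{t_{i}}=1$ (it meets $\K$ but has interior disjoint from it) and $\norm{t_{i}-t_{j}}\ge1$ for $i\ne j$ (disjoint interiors). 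Adjoining $t_{0}:=0$, we obtain $N+1$ points of $D$ that are pairwise at $\norm{\cdot}$–distance at least $1$.

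Now the volume estimate. The bodies $t_{i}+\tfrac12D$ for $i=0,1,\dots,N$ have pairwise disjoint interiors --- a common interior point $x$ would give $\norm{t_{i}-t_{j}}\le\norm{x-t_{i}}+\norm{x-t_{j}}<1$ --- and each is contained in $\tfrac32D$, since $\norm{x-t_{i}}<\tfrac12$ forces $\norm{x}\le\norm{t_{i}}+\tfrac12\le\tfrac32$. Taking $d$–dimensional Lebesgue volumes and using $\mathrm{vol}(\lambda D)=\lambda^{d}\,\mathrm{vol}(D)$ with $0<\mathrm{vol}(D)<\infty$,
$$(N+1)\Big(\tfrac12\Big)^{d}\mathrm{vol}(D)=\sum_{i=0}^{N}\mathrm{vol}\Big(t_{i}+\tfrac12D\Big)\le\mathrm{vol}\Big(\tfrac32D\Big)=\Big(\tfrac32\Big)^{d}\mathrm{vol}(D),$$
whence $N+1\le3^{d}$, i.e.\ $N\le3^{d}-1$. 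As the configuration was arbitrary, $H(\K)\le3^{d}-1$. The final assertion is then the special case in which $\K$ is the closed unit ball of a norm: such a ball is a centrally symmetric convex body, so the first part applies to it directly.

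The step that needs the most care is the dictionary --- specifically the claim that a \emph{touching} translate corresponds to a point on the boundary of $D$ and an \emph{overlapping} one to a point of $\mathrm{int}\,D$, which rests on the identity $\mathrm{int}(\K)-\mathrm{int}(\K)=\mathrm{int}(\K-\K)$ for convex bodies (and on the definiteness and continuity of the Minkowski functional). Once this is in hand, the remainder is the routine homothety/volume calculation above.
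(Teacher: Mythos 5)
The paper never proves this theorem --- it is imported from Hadwiger's 1957 paper and used as a black box --- so there is no internal proof to compare against; your job was therefore to supply one, and the argument you give is the standard difference-body proof and is correct. Passing to $D=\K-\K$, whose Minkowski functional is a norm with closed unit ball $D$, translating ``touching'' and ``non-overlapping'' into $\norm{t_i}=1$ and $\norm{t_i-t_j}\geq1$, and packing the bodies $t_i+\tfrac12D$ (with $t_0=0$ adjoined) inside $\tfrac32D$ gives $N+1\leq3^d$ exactly as you say; this is essentially Hadwiger's own symmetrization-plus-volume argument, just phrased with $\K-\K$ instead of $\tfrac12(\K-\K)$. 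The one ingredient you invoke without proof, $\mathrm{int}(\K)-\mathrm{int}(\K)=\mathrm{int}(\K-\K)$, is a standard fact for convex sets with nonempty interior (one inclusion holds because the sum of an open set with any set is open; the reverse follows from $\K-\K\subseteq\mathrm{cl}\left(\mathrm{int}(\K)-\mathrm{int}(\K)\right)$ together with $\mathrm{int}(\mathrm{cl}(U))=U$ for a nonempty open convex $U$), and you correctly flag it as the only delicate step; adding those two lines would make the write-up self-contained.

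One caveat concerns the ``in particular'' clause. A closed ball of a norm on $\Cplx^{d+1}$ is a convex body of real dimension $2(d+1)$, so what your first part actually yields for it is $H(\K)\leq3^{2(d+1)}-1$, not $3^d-1$; taken literally the clause as printed is a dimensional slip in the paper (for the Euclidean disk in $\Cplx^{1}$ it would assert $H(\K)\leq3^0-1=0$, while the true value is $6$). So your remark that the first part ``applies directly'' is right in spirit but should state the bound you really obtain, namely $3$ raised to the real dimension of the ball, minus one. This does not affect the paper's application: the matrix kissing lemma in Section \ref{sectionmatrix} only needs some bound depending on $k$, and the constant there is absorbed into $\gg_{k,M}$.
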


We apply these two theorems to prove a sum-product estimate, generalizing previous work.

\subsection{Sum-product problems}
Given a finite set $\A$ of a ring, the \emph{sumset} and the \emph{productset} are defined by
$$\A+\A=\set{A+B:A,B\in\A},$$
and
$$\A\A=\set{AB:A,B\in\A}.$$

It was conjectured by Erd\H{o}s and Szemer\'{e}di \cite{ERDSZE} that every finite set $\A$ of integers having large enough cardinality, there holds
\begin{equation}\label{conjecture}
\max(\size{\A+\A},\size{\A\A})\geq\size{\A}^{2-\eps},
\end{equation}
where $\eps\to0$. They proved that
\begin{equation}\label{sumproduct}
\max(\size{\A+\A},\size{\A\A})\gg\size{\A}^{1+\delta},
\end{equation}
for some $\delta>0$. In this paper, the notation $\gg$ is used when there is a hidden constant which does not depend on $\size{\A}$. As usual, the notation $\gg_{a}$ emphasizes that the hidden constant does depend on the parameter $a$.

In recent years, considerable research has been devoted to finding lower bounds on $\delta$ in (\ref{sumproduct}) of other rings and fields. The bound $\delta\geq1/54$ was found by Chang \cite{CHA1} for the ring of quaternions. For complex numbers, the best known bound to date is $\delta>1/(3+\eps)$, proved by Konyagin and Rudnev \cite{KONRUD}. In the case of real numbers, Konyagin and Shkredov \cite{KONSHK} gave an even better bound $\delta\geq1/3+c$, where $c$ is an absolute constant. For more details on the sum-product problem, we refer the interested readers to a recent survey \cite{GRASOL}.

In this paper, we prove that $\delta$ can be arbitrarily close to $1/3$ for quaternions. 

\begin{theorem}\label{thmquaternion}
Let $\A$ be a finite set of quaternions. Then,
\begin{equation}\label{estimate}
\max(\size{\A+\A},\size{\A\A})\gg\dfrac{\size{\A}^{4/3}}{\lceil\log\size{\A}\rceil^{1/3}}.
\end{equation}
\end{theorem}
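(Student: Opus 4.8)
The plan is to follow Solymosi's strategy of controlling the multiplicative energy by the sumset, the one new ingredient being Hadwiger's and Musin's kissing-number bounds, which substitute for the angular ordering of directions in the plane that is available only in dimension one. Define the multiplicative energy $E(\A)=\size{\set{(a,b,c,d)\in\A^4:ab=cd}}$. Grouping quadruples by the common value $q=ab=cd$ and writing $N(q)$ for the number of representations $q=ab$ with $a,b\in\A$, Cauchy--Schwarz gives $E(\A)=\sum_q N(q)^2\geq\bigl(\sum_q N(q)\bigr)^2/\size{\A\A}=\size{\A}^4/\size{\A\A}$. Hence, once we establish
$$E(\A)\ll\size{\A+\A}^2\,\lceil\log\size{\A}\rceil,$$
we obtain $\size{\A}^4\ll\size{\A\A}\,\size{\A+\A}^2\,\lceil\log\size{\A}\rceil\leq\max(\size{\A+\A},\size{\A\A})^3\,\lceil\log\size{\A}\rceil$, which is (\ref{estimate}).

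To bound $E(\A)$, observe that $ab=cd\iff a^{-1}c=bd^{-1}$, so $E(\A)=\sum_r f(r)g(r)$ with $f(r)=\size{\set{(a,c)\in\A^2:a^{-1}c=r}}$ and $g(r)=\size{\set{(b,d)\in\A^2:bd^{-1}=r}}$; by Cauchy--Schwarz it suffices to bound $\sum_r f(r)^2$ and $\sum_r g(r)^2$ separately, and since the conjugation anti-automorphism $x\mapsto\overline{x}$ of the quaternions carries $bd^{-1}=b'd'^{-1}$ to $\overline{d}^{-1}\overline{b}=\overline{d'}^{-1}\overline{b'}$ while preserving $\size{\A+\A}$ and $\size{\A\A}$, it is enough to prove $\sum_r f(r)^2\ll\size{\A+\A}^2\lceil\log\size{\A}\rceil$. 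Now $f(r)$ counts the points of $\A\times\A$ lying on the $4$-dimensional real subspace $V_r=\set{(a,ar):a\in\mathbb{H}}\subset\mathbb{H}^2\cong\R^8$, the graph of right multiplication by $r$. Because $\mathbb{H}$ is a division ring, distinct $V_r,V_{r'}$ meet only at the origin, so $V_r\oplus V_{r'}=\R^8$ and the addition map $V_r\times V_{r'}\to\R^8$ is a linear isomorphism; writing $P_r=V_r\cap(\A\times\A)$ this gives $\size{P_r+P_{r'}}=f(r)f(r')$, and coordinatewise $P_r+P_{r'}\subseteq(\A+\A)\times(\A+\A)$.

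A dyadic pigeonhole then selects a value $\Delta$ and a family $I$ of slopes with $f(r)\asymp\Delta$ for all $r\in I$ and $\sum_{r\in I}f(r)^2\gg\bigl(\sum_r f(r)^2\bigr)/\lceil\log\size{\A}\rceil$; since $\sum_{r\in I}f(r)^2\asymp\size{I}\,\Delta^2$, it remains to show $\size{I}\,\Delta^2\ll\size{\A+\A}^2$. In the classical real case the slopes lie in $\R$, can be listed in increasing order, and consecutive pairs $(r_j,r_{j+1})$ produce $\size{I}-1$ sumsets occupying pairwise disjoint angular sectors of $\R^2$ (the sum of two positive-quadrant vectors on lines of slopes $r_j<r_{j+1}$ has slope strictly between them), so $\sum_j f(r_j)f(r_{j+1})\ll\size{\A+\A}^2$ immediately. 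Over $\mathbb{H}$ the slopes sit in $\R^4$ with no such ordering, and this is where the kissing number enters: after normalizing the $V_r$, $r\in I$, to unit representatives in an appropriate convex body, two sumsets $P_r+P_{r'}$ and $P_s+P_{s'}$ can fail to be separated only when the representatives are mutually ``close'', and by Theorem \ref{Hadwiger} --- or, in the quaternionic case, by the exact value $c_4=24$ of Theorem \ref{Musin} --- each representative is close to only a bounded number $C$ of the others. Thus the interference graph on $I$ has bounded degree, hence contains a matching of size $\gg\size{I}/C$, whose sumsets are pairwise disjoint, each of size $\gg\Delta^2$, and all contained in $(\A+\A)\times(\A+\A)$; this yields $\size{I}\,\Delta^2\ll\size{\A+\A}^2$. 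The well-conditioned-matrix case runs identically once one checks that the relevant convex body is a norm ball of bounded dimension, so that Hadwiger's $3^d-1$ bound applies.

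The main obstacle is precisely this last geometric step: making rigorous the assertion that ``close subspaces are the only ones whose sumsets can collide'' and converting the kissing-number bound into a linear-in-$\size{I}$ family of pairwise-disjoint sumsets. In the plane the required separation is the trivial fact about weighted averages of slopes; over $\mathbb{H}$ one must instead exhibit an auxiliary convex body whose translates encode the possible overlaps and then invoke the boundedness of its kissing number, which is exactly why Hadwiger's theorem, and in the quaternionic case Musin's count $c_4=24$, are needed. A secondary technical point is to keep the non-commutative bookkeeping straight --- distinguishing left and right multiplication throughout and using the conjugation symmetry to reduce to a single one of the two energies.
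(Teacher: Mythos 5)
Your overall skeleton does match the paper's: bound $\size{\A\A}$ via multiplicative energy and Cauchy--Schwarz, dyadically pigeonhole the ratios $r$ with $f(r)\asymp\Delta$, pair up ratios, inject products of fibres into $(\A+\A)\times(\A+\A)$ (your observation that $V_r\oplus V_{r'}=\R^8$ is exactly the paper's uniqueness lemma, Lemma \ref{lemmauniquenessquaternion}), and control overlaps with a kissing-number bound. But the step you yourself flag as the ``main obstacle'' is precisely the mathematical content of the theorem, and what you sketch in its place does not work as stated. First, you never prove (or even state) the quaternionic mediant inequality that makes the whole scheme run: if $b,d$ are nonzero quaternions \emph{in the same hexadecant}, then $\norm{(a+c)(b+d)^{-1}-ab^{-1}}\leq\norm{cd^{-1}-ab^{-1}}$ (Lemma \ref{lemmanormquaternion}). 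This requires first passing to a positive-fraction subset of $\A$ lying in one hexadecant with $0\notin\A$; without that reduction the sum of denominators need not even be invertible (take $a'=-a$ in your parametrization), and there is no containment of the ``slope'' of a sum in any ball, hence no separation statement at all for far-apart pairs.

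Second, your replacement mechanism --- normalize to ``unit representatives,'' declare that sumsets interfere only when representatives are ``close,'' conclude the interference graph has bounded degree by Hadwiger/Musin, and extract a matching --- is not justified and, at a fixed scale of closeness, is false: a kissing number bounds the number of non-overlapping translates touching a body, not the number of points lying near a given point, and the selected ratios in a dyadic class can cluster at wildly different local scales. The paper's fix is adaptive: pair each $x\in\RR$ with its \emph{nearest} neighbour $\phi(x)$ in $\RR$, so that by the mediant inequality every element of $\SSS_x$ maps under $(p,q)\mapsto pq^{-1}$ into the closed ball $\B_x$ centred at $x$ of radius $\norm{x-\phi(x)}$; since each radius is a nearest-neighbour distance, no ball contains another centre in its interior, and Lemma \ref{lemmakissing} (a rescaling argument reducing to $c_4=24$) shows a point lies in at most $25$ such balls. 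No disjointness or matching is needed --- one simply loses a factor $25$ in $\size{\bigcup_x\SSS_x}\geq\frac1{25}\sum_x\size{\SSS_x}$. To repair your write-up you would need the hexadecant reduction, the norm lemma, the nearest-neighbour choice of pairs, and the adapted-radii multiplicity lemma; as it stands the proposal assumes the conclusion of exactly these missing steps.
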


This sum-product estimate (\ref{estimate}) in the reals was first achieved by second author in \cite{SOL}. He used an observation that $(a+c)/(b+d)$ lies between $a/b$ and $c/d$ for positive real numbers. Regarding $(a+c)/(b+d)$ as an element in $(\A+\A)\times(\A+\A)$, Solymosi showed that
$$\size{(\A+\A)\times(\A+\A)}\gg\dfrac{\size{\A}^4}{\size{\A\A}\log\lceil\size{\A}\rceil}.$$
Konyagin and Rudnev \cite{KONRUD} then generalized this result to the complex plane, by showing that the complex number $(a+b)/(c+d)$ lies in an open set and that some pairwise disjoint open sets can be carefully chosen to contain many elements in $(\A+\A)\times(\A+\A)$. With this result they improved an earlier bound  by the second author \cite{SOL2}. We combine the  ideas from \cite{SOL,SOL2} to prove Theorem \ref{thmquaternion}. A generalization of the previous ideas to the ring of quaternions is that $(a+c)(b+d)^{-1}$ lies in the closed ball centred at $ab^{-1}$ with radius $\norm{ab^{-1}-cd^{-1}}$, as long as $b$ and $d$ are in the same hexadecant. Some of these balls, each containing many elements of the form $(a+c)(b+d)^{-1}$, are chosen. Unlike the case for real numbers, these closed balls may intersect, and so it may happen that some elements are counted many times. We use the tool of kissing number to show that these balls cannot intersect too much, namely, by a corollary (Lemma \ref{lemmakissing}) of Theorem \ref{Musin}, each element can be counted by at most 25 times. 

The same proof, supplemented by Theorem \ref{Hadwiger}, allows us to obtain the same bound for a certain family of well-conditioned matrices. The sum-product estimate for matrices was first studied by Chang \cite{CHA2}, who noted that the sum-product conjecture (\ref{conjecture}) does not hold for square matrices having determinant 1, and proved that, assuming
\begin{equation}\label{detA-B}
\det(A-B)\neq0,\quad\mbox{for any distinct $A,B\in\A$,}
\end{equation}
$\max(\size{\A+\A},\size{\A\A})$ is larger than $f(\size{\A})\size{\A}$, where $f(\size{\A})$ goes to infinity with $\size{\A}$. Tao \cite{TAO} showed that the function $f$ grows polynomially. Solymosi and Vu \cite{SOLVU} then proved that $\delta\geq1/4$ for the ring of $k$ by $k$ matrices satisfying (\ref{detA-B}), where the hidden constant depends on $k$ as well as the largest condition number of matrices in $\A$. A result of Solymosi and Tao in \cite{SOLTAO} implies that $\delta>1/(4+\eps)$ by assuming only (\ref{detA-B}). Under different assumptions, we prove that $\delta>1/(3+\eps)$, as stated below.

\begin{theorem}\label{thmmatrix}
Let $\A$ be a finite set of $k$ by $k$ invertible matrices with complex entries. Suppose that if $A,B,C,D\in\A$, then either  $AB^{-1}=CD^{-1}$ or the block matrix
$\begin{pmatrix}
A & C\\
B & D
\end{pmatrix}$
is invertible. Then,
$$\max(\size{\A+\A},\size{\A\A})\gg_{k,M}\dfrac{\size{\A}^{4/3}}{\lceil\log\size{\A}\rceil^{1/3}},$$
where $M=\max\limits_{A\in\A}\cond(A)=\max\limits_{A\in\A}\norm{A}\norm{A^{-1}}$ is the largest condition number of matrices in $\A$.
\end{theorem}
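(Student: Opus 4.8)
The plan is to transplant the proof of Theorem~\ref{thmquaternion} to the ring $M_k(\Cplx)$, regarded as $\Cplx^{k^2}$ equipped with (say) the operator norm $\norm{\cdot}$, so that closed balls are convex bodies and Theorem~\ref{Hadwiger} applies. Two modifications are needed: every use of Musin's theorem (Theorem~\ref{Musin}) to bound a kissing number is replaced by Hadwiger's theorem (Theorem~\ref{Hadwiger}), so that the overcounting constant $25$ of the quaternion argument becomes a constant $C=C(k)$; and one extra preliminary step is inserted to control the conditioning. The only algebraic input is the mediant-type identity
$$(A+C)(B+D)^{-1}=AB^{-1}+\bigl(CD^{-1}-AB^{-1}\bigr)\,D(B+D)^{-1},$$
valid whenever $B$, $D$, $B+D$ are invertible, together with the observation that from the value $(A+C,\,B+D)$ and the ordered pair of slopes $(AB^{-1},\,CD^{-1})$ with $AB^{-1}\ne CD^{-1}$ one recovers $B$ via $(AB^{-1}-CD^{-1})B=(A+C)-CD^{-1}(B+D)$, then $D=(B+D)-B$, $A=AB^{-1}\cdot B$, $C=CD^{-1}\cdot D$; here invertibility of $AB^{-1}-CD^{-1}$ is precisely the block-matrix hypothesis, since a Schur-complement computation shows $\begin{pmatrix}A&C\\B&D\end{pmatrix}$ is invertible iff $AB^{-1}-CD^{-1}$ is (as $B,D$ are invertible).

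\emph{Conditioning.} Write each $B\in\A$ in polar form $B=U_B\abs{B}$ with $U_B$ unitary and $\abs{B}=(B^{*}B)^{1/2}\succeq(\norm{B}/M)I$ since $\cond(\abs{B})=\cond(B)\le M$. Cover the compact group of $k\times k$ unitaries by $N=N(k,M)$ balls of radius at most $1/(2M)$ and partition $\A$ by which ball contains $U_B$; some part $\A'$ has $\size{\A'}\ge\size{\A}/N$, and $\A'\subseteq\A$ gives $\size{\A'+\A'}\le\size{\A+\A}$ and $\size{\A'\A'}\le\size{\A\A}$, so it suffices to prove the estimate for $\A'$ and absorb $N$ into the $\gg_{k,M}$. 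After renaming we may assume all $U_B$ lie within $1/(2M)$ of a fixed unitary $U_0$, so $U_0^{*}B=\abs{B}+E_B$ with $\norm{E_B}\le\norm{B}/(2M)$ and hence the Hermitian part of $U_0^{*}B$ is $\succeq\tfrac1{2M}\norm{B}\,I$. Consequently, for any $B,D\in\A$ the Hermitian part of $U_0^{*}(B+D)$ is $\succeq\tfrac1{2M}(\norm{B}+\norm{D})I$, so $B+D$ is invertible, $\norm{(B+D)^{-1}}\le 2M/(\norm{B}+\norm{D})$, and (using $B+D=U_0\cdot U_0^{*}(B+D)$) $\norm{D(B+D)^{-1}}\le 2M$. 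The identity above then yields, for $A,B,C,D\in\A$ with $X:=AB^{-1}\ne Y:=CD^{-1}$,
$$\norm{(A+C)(B+D)^{-1}-X}\le 2M\,\norm{X-Y},$$
that is, the sum-ratio $(A+C)(B+D)^{-1}$ lies in the closed ball $\bar B\bigl(X,\,2M\norm{X-Y}\bigr)$.

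\emph{Counting.} For $X\in\A\A^{-1}$ set $r(X)=\size{\set{(A,B)\in\A^{2}:AB^{-1}=X}}$, so $\sum_X r(X)=\size{\A}^{2}$ and, by Cauchy--Schwarz, the multiplicative energy $\sum_X r(X)^{2}$ is $\gg\size{\A}^{4}/\size{\A\A}$, exactly as in the real and complex cases \cite{SOL,SOL2,KONRUD}. A dyadic pigeonhole in $r(X)\in\{1,\dots,\size{\A}\}$ produces a level $t$ and a set $\mathcal X$ of slopes on which $r(X)\asymp 2^{t}$ and $\size{\mathcal X}\,4^{t}\gg\size{\A}^{4}/(\size{\A\A}\lceil\log\size{\A}\rceil)$. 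Just as for quaternions, one selects from $\mathcal X$ a collection of slope pairs $(X,Y)$ (a nearest-neighbour-type pairing of $\mathcal X$) so that the balls $\bar B(X,2M\norm{X-Y})$ have bounded overlap: the analogue of Lemma~\ref{lemmakissing}, proved now from Hadwiger's theorem (Theorem~\ref{Hadwiger}) in place of Musin's (Theorem~\ref{Musin}), shows that every point of $M_k(\Cplx)$ — in particular every element of $(\A+\A)\times(\A+\A)$ realised as some $(A+C)(B+D)^{-1}$ — lies in at most $C=C(k,M)$ of these balls, by an elementary argument bounding the number of unit vectors that are pairwise far from alignment. Each selected pair contributes $r(X)r(Y)\gg 4^{t}$ pairs $(A+C,\,B+D)$, distinct within that pair by the recovery above; summing over selected pairs and dividing by $C$,
$$\size{\A+\A}^{2}=\size{(\A+\A)\times(\A+\A)}\;\gg_{k,M}\;\size{\mathcal X}\,4^{t}\;\gg_{k,M}\;\frac{\size{\A}^{4}}{\size{\A\A}\,\lceil\log\size{\A}\rceil},$$
and therefore $\max(\size{\A+\A},\size{\A\A})^{3}\gg_{k,M}\size{\A}^{4}/\lceil\log\size{\A}\rceil$, which is the claim.

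\emph{Main obstacle.} Over the quaternions every nonzero element is invertible and the norm is multiplicative, so $\norm{d(b+d)^{-1}}=\norm{d}/\norm{b+d}\le1$ whenever $b,d$ lie in a common hexadecant; this ``dilation constant $=1$'' is what makes the $60^{\circ}$ argument behind Lemma~\ref{lemmakissing} clean and absolute. For matrices neither the invertibility of $B+D$ nor any bound on $\norm{D(B+D)^{-1}}$ comes for free, and the polar-decomposition partition above is what buys both; the two points requiring genuine care are therefore this conditioning step and the verification that the bounded-overlap conclusion of Lemma~\ref{lemmakissing} still holds once the ball radii are dilated by the factor $O(M)$ — which is where the constant becomes $k$- and $M$-dependent rather than absolute. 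Everything else is formally identical to the proof of Theorem~\ref{thmquaternion}, with $3^{\dim_{\R}M_k(\Cplx)}-1$ playing the role of $c_4=24$.
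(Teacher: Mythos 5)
Your overall architecture (energy, dyadic pigeonholing, nearest-neighbour balls, recovery of $(A,B,C,D)$ via the Schur-complement equivalence, Hadwiger in place of Musin) is the paper's, but the step on which everything hinges fails. Your conditioning via polar decomposition only yields $\norm{D(B+D)^{-1}}\le 2M$, so the mediant $(A+C)(B+D)^{-1}$ is only known to lie in the ball of radius $2M\norm{X-\phi(X)}$ about $X$. For such inflated balls the bounded-overlap claim you attribute to ``an elementary argument bounding the number of unit vectors that are pairwise far from alignment'' is simply false, and no constant $C(k,M)$ exists. Indeed, already on a single ray through $P=0$ take centres $X_i$ with $\norm{X_i}=t^i$, $t=2M/(2M-1)$: the nearest-neighbour distance of $X_i$ is $d_i=t^{i-1}(t-1)$, every ball $\bar B(X_i,2Md_i)$ contains $P$ since $t^i\le 2M\,t^{i-1}(t-1)$, and the number of centres is arbitrary. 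The same construction works for any inflation factor strictly greater than $1$ (take $t=(1+\eta)/\eta$), in any dimension, so the obstruction is not repaired by making the unitary balls finer in your covering, by dyadic pigeonholing of norms, nor by re-pairing the slopes: replacing $\phi(X)$ by any other element of $\mathcal X$ only increases the radius, and deleting centres does not reduce the multiplicity at $P$. The kissing-number mechanism (Lemma \ref{lemmakissing} and its matrix analogue via Theorem \ref{Hadwiger}) genuinely requires that no centre lie in the \emph{interior} of another ball, i.e. that the radius be at most the nearest-neighbour distance, which is exactly the contraction constant $1$ and not $O(M)$.

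This is precisely why the paper does not stop at ``same rough direction'': it normalizes each $A$ by a $k$-th root $\rho(A)$ of its determinant, shows the normalized matrices have entries in $[-Mk,Mk]$ (Claim \ref{claimAtilde}), and partitions $\A$ into $O_{k,M}(1)$ classes according to the quadrant of $\rho(A)$ and an $\eps$-fine box decomposition of all $2k^2$ real entries of $\tilde A$, with $\eps=\min(\delta/(3Mk^2),1/(6M^2k^3))$ chosen via uniform continuity of the determinant. Within one class, $B+D$ is invertible (Claim \ref{claimB+Dinvertible}) and, crucially, a Neumann-series estimate gives $\norm{D(B+D)^{-1}}\le 1$ (Lemma \ref{lemmanormmatrix}), so the balls have radius exactly $\norm{X-\phi(X)}$ and the Hadwiger bound applies verbatim as in the quaternion case. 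To salvage your write-up you would need a comparable ``same class'' reduction that pins down each matrix up to a positive scalar to within $O_{k,M}(\eps)$ (your $1/(2M)$-net on the unitary part alone is far too coarse), at which point you would be reproducing the paper's Section \ref{sectionmatrix}; as written, the counting step does not go through.
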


Unlike quaternions, there is no multiplicative norm on the set of matrices. For easier calculations, we use the operator $1$-norm on our matrices, i.e. $\norm{A}=\max\limits_{1\leq{j}\leq{k}}\sum\limits_{i=1}^k\abs{A_{ij}}$.

\begin{remark}
We cannot omit the assumption that either $AB^{-1}=CD^{-1}$ or the block matrix
$\begin{pmatrix}
A & C\\
B & D
\end{pmatrix}$
is invertible. Indeed, the following matrix families, inspired by Chang \cite{CHA2}, give a small sumset and productset. Take
$$\A_n=\left\{\begin{pmatrix}
1 & i/n\\
0 & 1
\end{pmatrix}:1\leq{i}\leq{n}\right\},$$
then it is easy to see that $\size{\A_n+\A_n}=\size{\A_n\A_n}=2\size{\A_n}-1$, and $\cond\left(\begin{pmatrix}
1 & i/n\\
0 & 1
\end{pmatrix}\right)=(1+i/n)^2\leq4$.
\end{remark}

Since quaternions can be represented by 4 by 4 real matrices, and since all quaternions have condition number 1 (using the quaternion norm), Theorem \ref{thmquaternion} is implied by Theorem \ref{thmmatrix}. We note that the proofs of the two theorems are based on the same idea. We will prove Theorem \ref{thmquaternion}, as a toy version of Theorem \ref{thmmatrix}, in the next section. Then, we describe the necessary modifications for general matrices in Section \ref{sectionmatrix}.

\section{Quaternions}\label{sectionquaternion}
In this section, we are going to prove \begin{equation}\label{quaternionstatement}
\size{\A+\A}^2\size{\A\A}\gg\dfrac{\size{\A}^4}{\lceil\log\size{\A}\rceil},
\end{equation}
where $\A$ is a finite set of quaternions. This immediately implies Theorem \ref{thmquaternion}.

By adjusting the constant hidden in (\ref{quaternionstatement}), it suffices to prove the inequality for a positive fraction of $\A$. This allows us to assume that $0\not\in\A$ and that the quaternions in $\A$ are in the same hexadecant, defined below. The former assumption implies that all elements in $\A$ are invertible and the latter one ensures that the sum of any two elements in $\A$ is invertible and is used in proving Lemma \ref{lemmanormquaternion}.

The idea of hexadecants is an analogue of quadrants in the complex plane. Every quaternion $a=w+xi+yj+zk$ can be viewed as a vector $(w,x,y,z)$ in $\R^4$. We say that some quaternions are in the \emph{same hexadecant} if in each of the four coordinates in the vector representation, they are all non-negative or all non-positive. Pigeonholing the finite set $\A$, we get a subset $\tilde{\A}$ of $\A$ of size at least $\size{\A}/16$ so that the quaternions in $\tilde{\A}$ are in the same hexadecant.

Instead of working with the productset $\A\A$ directly, we will make use of the \emph{ratioset} $\A/\A$ defined as follows:
$$\A/\A=\A\Ainv\cap{\Ainv\A}.$$

The multiplicative energy of $\A$, namely
\begin{equation*}\begin{split}
E(\A)\
&=\size{\set{(a,b,c,d)\in\A^4:ca=db}}\\
&=\size{\set{(a,b,c,d)\in\A^4:ab^{-1}=c^{-1}d}},
\end{split}\end{equation*}
serves as a link between the productset $\A\A$ and the ratioset $\A/\A$ by using the Cauchy-Schwart inequality:
$$E(\A)\geq\dfrac{\size{\A}^4}{\size{\A\A}}.$$
Therefore, in order to prove (\ref{quaternionstatement}), it suffices to prove
\begin{equation}\label{EA,AA}
\dfrac{E(\A)}{\log\lceil\size{\A}\rceil}\ll\size{\A+\A}^2.
\end{equation}

We first express the multiplicative energy $E(\A)$ by the number of representatives in $\A/\A$. Since the multiplication of quaternions is not commutative, for each $x\in{\A/\A}$, the number of representatives of $x\in\Ainv\A$ and that of $x\in\A\Ainv$ are, in general, not the same. We denote by $\ell(x)$ and $r(x)$ these two numbers, i.e. $$\ell(x)=\size{x\A\cap\A}\quad\text{and}\quad{r(x)}=\size{\A\cap{x\A}}.$$
This allows us to write
$$E(\A)=\sum_{x\in\A/\A}\ell(x)r(x).$$

In the following, we assume that
\begin{equation}\label{leftrightassumption}
\sum_{\substack{x\in{\A/\A}\\\ell(x)\geq{r(x)}}}\ell(x)r(x)\geq\sum_{\substack{x\in{\A/\A}\\\ell(x)\leq{r(x)}}}\ell(x)r(x),
\end{equation}
otherwise our results can be proved similarly, and therefore omited.

With such assumption, the multiplicative energy $E(\A)$ can be estimated as:
\begin{equation}\label{EA,ell}
E(\mathcal{A})=\sum_{x\in{\mathcal{A}/\mathcal{A}}}\ell(x)r(x)\leq2\sum_{\substack{x\in{\mathcal{A}/\mathcal{A}}\\\ell(x)\geq{r(x)}}}\ell(x)r(x)\leq2\sum_{\substack{x\in{\mathcal{A}/\mathcal{A}}\\\ell(x)\geq{r(x)}}}\ell(x)^2.
\end{equation}

Using the pigeonhole principle and the fact that $1\leq\ell(x)\leq\size{\A}$, there is an index, $I$, so that
those elements in
$\RR:=\set{x\in\A/\A:\ell(x)\geq{r(x)}\,\,\text{and}\,\,2^I\leq\ell(x)<2^{I+1}}$
contribute at least $1/\lceil\log_2\size{\A}\rceil$ of the sum on the right hand side of (\ref{EA,ell}). Mathematically,
\begin{equation}\label{EA,R}
\dfrac{E(\A)}{2\lceil\log|\A|\rceil}\leq\sum_{x\in\RR}\ell(x)^2<|\RR|2^{2I+2}.
\end{equation}

From now on, we restrict our attention to $\RR$. For each $x\in\RR$, we will form a set $\SSS_x$ consisting of elements in $(\A+\A)\times(\A+\A)$. We will then show that the union $\bigcup_{s\in\RR}\SSS_x$, as a subset of $(\A+\A)\times(\A+\A)$, has size $\gg{E(\A)/\lceil\log|\A|\rceil}$.

For each $x\in\RR$, we define $\SSS_x$ by
$$\SSS_x=\set{(a+c,b+d)\in(\A+\A)\times(\A+\A):ab^{-1}=x\,\,\text{and}\,\,cd^{-1}=\phi(x)},$$
where $\phi:\RR\to\RR$ is a map sending an element to a closest element in $\RR$, i.e.
$$\norm{x-\phi(x)}\leq\norm{x-y},$$
for all $x,y\in\RR$.

The following lemma guarantees that
\begin{equation}\label{sizeofSx}
\size{\SSS_x}\geq\ell(x)\ell(\phi(x))\geq2^{2I}.
\end{equation}

\begin{lemma}\label{lemmauniquenessquaternion}
Given $p,q\in{\A+\A}$ and distinct $x,y\in{\A\Ainv}$. There is at most one quadruple $(a,b,c,d)$ in ${\A^4}$ satisfying
$$a+c=p,\quad{b+d=q},\quad{ab^{-1}=x},\quad{cd^{-1}=y}.$$
\end{lemma}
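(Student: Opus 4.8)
The plan is to show that the four linear/multiplicative constraints overdetermine the quadruple $(a,b,c,d)$, so that once $p,q,x,y$ are fixed, each of $a,b,c,d$ is forced. First I would use the multiplicative relations to write $a = xb$ and $c = yd$. Substituting into the additive relations $a+c = p$ and $b+d=q$ gives the system $xb + yd = p$ and $b+d = q$. Eliminating $d = q-b$ from the first equation yields $xb + y(q-b) = p$, i.e. $(x-y)b = p - yq$.

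The key step is that $x \neq y$ forces $x - y$ to be an invertible quaternion: the only non-invertible quaternion is $0$, so $x - y \neq 0$ implies $(x-y)^{-1}$ exists. Hence $b = (x-y)^{-1}(p - yq)$ is uniquely determined, and then $d = q - b$, $a = xb$, $c = yd$ are all determined as well. Thus there is at most one such quadruple.

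The main (minor) obstacle is bookkeeping with the noncommutativity of quaternion multiplication: one must be careful to keep $x$ and $y$ on the correct side throughout (writing $a = xb$ rather than $bx$, consistent with the convention $ab^{-1} = x$), and the elimination must be done by left-multiplication so that the factor $(x-y)$ appears on the left of $b$. No division on both sides is needed beyond the single inversion of $x - y$. I should also note that the hypothesis only uses $x, y \in \A\Ainv$ (so that $ab^{-1} = x$, $cd^{-1} = y$ make sense with $a,b,c,d \in \A$) and $x \neq y$; the conclusion "at most one" covers the case where no such quadruple exists at all. This lemma will later be applied with $x$ ranging over $\RR$ and $y = \phi(x)$, and it is precisely what makes $\size{\SSS_x} \geq \ell(x)\ell(\phi(x))$ by identifying each pair in $\SSS_x$ with a distinct pair of representatives.
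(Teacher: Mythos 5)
Your proposal is correct and follows essentially the same route as the paper: both use that $x\neq y$ makes $x-y$ (equivalently $y-x$) an invertible quaternion and then solve the linear system explicitly, the only cosmetic difference being that you isolate $b=(x-y)^{-1}(p-yq)$ while the paper isolates $d=(y-x)^{-1}(p-xq)$ first. Your handling of noncommutativity (keeping $x,y$ on the left, consistent with $a=xb$, $c=yd$) matches the paper's computation.
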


\begin{proof}
Since $y-x\neq0$, the inverse of $y-x$ exists in the ring of quaternions. One can recover the quaternions $a,b,c,d$ uniquely by direct solving:
$$d=(y-x)^{-1}(p-xq),\quad{b=q-d},\quad{c=yd},\quad{a=xb}.$$
\end{proof}

The two lemmata below together imply that no element in $(\A+\A)\times(\A+\A)$ can appear in too many $\SSS_x$'s. Indeed, Lemma \ref{lemmanormquaternion} shows that if $(a+c,b+d)\in\SSS_x$, then the quaternion $(a+c)(b+d)^{-1}$ is contained in $\B_x$, where $\B_x$ is the closed ball centred at $x$ with radius $\norm{\phi(x)-x}$. Consider the collection of balls $\B_x$, where $x\in\RR$. Notice that none of these balls contains another centre of these balls in the interior. It is then shown in Lemma \ref{lemmakissing} that no quaternion, viewed as a vector in $\R^4$, can be contained simultaneously in too many such closed balls, establishing our claim. We note here that the quaternion norm is essentially the Euclidean norm in $\R^4$.

\begin{lemma}\label{lemmanormquaternion}
Let $a,b,c,d$ be quaternions. Suppose that $b$ and $d$ are non-zero quaternions in the same hexadecant. Then,
$$\norm{(a+c)(b+d)^{-1}-ab^{-1}}\leq\norm{cd^{-1}-ab^{-1}}.$$
\end{lemma}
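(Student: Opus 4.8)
The plan is to reduce the inequality to the multiplicativity of the quaternion norm together with the observation that two vectors lying in the same hexadecant have non-negative Euclidean inner product. Write $x=ab^{-1}$ and $y=cd^{-1}$, so that $a=xb$ and $c=yd$. First I would record that $b+d$ is invertible: viewing $b$ and $d$ as vectors in $\R^4$, the same-hexadecant hypothesis forces $b_id_i\geq0$ in each coordinate, hence $\langle b,d\rangle=\sum_i b_id_i\geq0$, and therefore $\norm{b+d}^2=\norm{b}^2+2\langle b,d\rangle+\norm{d}^2\geq\norm{d}^2>0$. In particular $(b+d)^{-1}$ exists.

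The key step is a direct computation that disposes of the non-commutativity by keeping $x$ on the left throughout:
\begin{equation*}
(a+c)(b+d)^{-1}-ab^{-1}=(xb+yd)(b+d)^{-1}-x(b+d)(b+d)^{-1}=(y-x)\,d\,(b+d)^{-1}.
\end{equation*}
Taking norms and using that $\norm{\cdot}$ is multiplicative on the quaternions (so $\norm{(b+d)^{-1}}=1/\norm{b+d}$),
\begin{equation*}
\norm{(a+c)(b+d)^{-1}-ab^{-1}}=\norm{y-x}\cdot\frac{\norm{d}}{\norm{b+d}}.
\end{equation*}
Since the estimate above gives $\norm{b+d}\geq\norm{d}$, the fraction is at most $1$, and because $\norm{y-x}=\norm{cd^{-1}-ab^{-1}}$, this is exactly the claimed bound.

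I do not expect a real obstacle here: the entire content is the cancellation that isolates $y-x$ on one side so that multiplicativity of the norm applies, together with the inequality $\langle b,d\rangle\geq0$ supplied by the hexadecant hypothesis — the quaternionic substitute for the fact that in the reals $(a+c)/(b+d)$ is a convex combination of $a/b$ and $c/d$ when $b,d>0$. The only point requiring care is to check $b+d\neq0$ before inverting, which is why I would settle that at the outset.
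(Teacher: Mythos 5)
Your proof is correct and follows essentially the same route as the paper: both arguments reduce the difference to the factorization $(cd^{-1}-ab^{-1})\,d\,(b+d)^{-1}$ (you get it by substituting $a=xb$, $c=yd$ and cancelling, the paper by rewriting $(b+d)^{-1}=b^{-1}\left(1-d(b+d)^{-1}\right)$), then invoke multiplicativity of the quaternion norm and $\norm{b+d}\geq\norm{d}$ from the same-hexadecant hypothesis. Your explicit verification of $\langle b,d\rangle\geq0$ and of the invertibility of $b+d$ just makes precise what the paper asserts in the surrounding text.
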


\begin{proof}
Since $b(b+d)^{-1}+d(b+d)^{-1}=1$, we can write $(b+d)^{-1}=b^{-1}\left(1-d(b+d)^{-1}\right)$. Hence,
\begin{equation*}\begin{split}
\norm{(a+c)(b+d)^{-1}-ab^{-1}}
&=\norm{ab^{-1}\left(1-d(b+d)^{-1}\right)+c(b+d)^{-1}-ab^{-1}}\\
&=\norm{(cd^{-1}-ab^{-1})d(b+d)^{-1}}\\
&=\norm{cd^{-1}-ab^{-1}}\dfrac{\norm{d}}{\norm{b+d}}\\
&\leq\norm{cd^{-1}-ab^{-1}},
\end{split}\end{equation*}
because the norm of quaternions is multiplicative and the inequality $\norm{b+d}\geq\norm{d}$ is a consequence of the assumption that $b$ and $d$ are in the same hexadecant.
\end{proof}

The next lemma is a probably well-known corollary Theorem \ref{Musin}. For the sake of completeness, we include a proof of this lemma.

\begin{lemma}\label{lemmakissing}
Let $P$ be a point in $\R^4$ which is contained in closed balls $\B_1,\ldots,\B_m$ simultaneously, where $\B_i$ has centre $Q_i\in\R^4$. Suppose that $Q_i$ is not contained in the interior of $\B_j$, unless $i=j$. Then $m\leq25$.
\end{lemma}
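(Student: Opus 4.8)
The plan is to reduce Lemma~\ref{lemmakissing} to the exact value $c_4=24$ furnished by Theorem~\ref{Musin}. The idea is to show that the rays from $P$ to the centres $Q_i$ are pairwise separated by an angle of at least $\pi/3$, so that (after rescaling) the $Q_i$ yield a kissing arrangement of unit balls around $P$. To set up, translate so that $P$ is the origin and write $r_i\ge0$ for the radius of $\B_i$. The hypothesis $P\in\B_i$ then reads $\norm{Q_i}\le r_i$, and the hypothesis that $Q_i$ lies outside the interior of $\B_j$ (for $i\ne j$) reads $\norm{Q_i-Q_j}\ge r_j$. I would also note that at most one ball can have centre $P$: if $Q_i=Q_j=P$ with $i\ne j$, then $0=\norm{Q_i-Q_j}\ge r_j$ forces $r_j=0$, so $\B_i=\B_j=\set{P}$. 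Hence it suffices to show that there are at most $24$ indices $i$ with $Q_i\ne P$.

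The heart of the argument is the following claim: if $i\ne j$ and $Q_i,Q_j\ne P$, then the angle $\theta:=\angle Q_iPQ_j$ satisfies $\theta\ge\pi/3$. To prove it, I would relabel so that $\norm{Q_i}\le\norm{Q_j}$ and combine the inequality $\norm{Q_i-Q_j}\ge r_j\ge\norm{Q_j}$ with the law of cosines $\norm{Q_i-Q_j}^2=\norm{Q_i}^2+\norm{Q_j}^2-2\norm{Q_i}\norm{Q_j}\cos\theta$; this gives $\norm{Q_i}^2\ge2\norm{Q_i}\norm{Q_j}\cos\theta$, and dividing by $2\norm{Q_i}\norm{Q_j}>0$ and using $\norm{Q_i}\le\norm{Q_j}$ yields $\cos\theta\le\norm{Q_i}/(2\norm{Q_j})\le\tfrac12$. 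In particular the unit vectors $u_i:=Q_i/\norm{Q_i}$, as $i$ ranges over the indices with $Q_i\ne P$, are pairwise distinct and satisfy $\norm{u_i-u_j}^2=2-2\cos\theta\ge1$ for $i\ne j$.

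It remains to translate this spherical code back into a kissing configuration. The points $2u_i$ lie on the sphere of radius $2$ about $P$ and are pairwise at Euclidean distance at least $2$, so the closed unit balls centred at the $2u_i$ are non-overlapping and each of them touches the closed unit ball centred at $P$. By the definition of the kissing number and Theorem~\ref{Musin}, there are at most $c_4=24$ such balls, hence at most $24$ indices with $Q_i\ne P$. Adding the at most one index with $Q_i=P$, we conclude $m\le25$.

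The step I expect to be most delicate is the geometric claim, and specifically the bookkeeping of which radius to invoke: the inequality $\norm{Q_i-Q_j}\ge r_j$ must be applied with $j$ the index whose centre is farther from $P$, and it is the combination with $r_j\ge\norm{Q_j}$ (coming from $P\in\B_j$) that forces the $60^\circ$ separation. The other point requiring care is the exact equivalence between a spherical code on $S^3$ of minimum angle $60^\circ$ and a kissing arrangement in $\R^4$, so that the final count is $c_4$ with the extra $+1$ attributable solely to the ball possibly centred at $P$; the remaining steps are one-line computations.
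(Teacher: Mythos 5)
Your proof is correct: the reduction of the configuration to a kissing arrangement and the final count $m-1\le c_4=24$ (plus at most one centre equal to $P$, which in the paper's application is harmless since the radii $\norm{\phi(x)-x}$ are positive and the centres distinct) is exactly the intended use of Theorem~\ref{Musin}, and your $60^\circ$-separation claim is proved correctly, with the radius inequality applied to the farther centre as it must be. The one genuine difference from the paper is how the separation is established. You use the Euclidean inner product: the law of cosines plus $\norm{Q_i-Q_j}\ge r_j\ge\norm{Q_j}$ gives $\cos\theta\le\tfrac12$, and you then rescale to points $2u_i$ at mutual distance $\ge2$ on a radius-$2$ sphere. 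The paper instead projects all centres radially onto the sphere of radius $r=\min_i\norm{Q_i-P}$ about $P$ and shows the projected points $C_i$ satisfy $\norm{C_i-C_j}\ge r$ by a pure triangle-inequality computation (if $\norm{C_i-C_j}<r$ then $\norm{Q_i-Q_j}<\norm{Q_j-P}\le$ radius of $\B_j$, contradicting the hypothesis); no angles or inner products appear. The two arguments are equivalent in $\R^4$, and yours is arguably the more transparent in the Euclidean setting; what the paper's version buys is norm-independence, which is precisely why it is reused verbatim in Section~\ref{sectionmatrix} for the matrix analogue under the operator $1$-norm, where Hadwiger's bound (Theorem~\ref{Hadwiger}) replaces Musin's and no law of cosines is available. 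If you wanted your argument to cover that case as well, you would need to replace the inner-product step by the paper's norm-only computation.
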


\begin{proof}
It suffices to construct, in $\R^4$, $m-1$ non-overlapping spheres $S_i$, each of which touches another sphere $S$, and all these $m$ spheres have the same radius.

Reorder the spheres if necessary, we may assume that $P\neq{Q_i}$ for $1\leq{i}\leq{m-1}$ and that
$$0<r:=\norm{Q_1-P}\leq\norm{Q_i-P},\quad\mbox{for all $1\leq{i}\leq{m-1}$.}$$

Now, for each $1\leq{i}\leq{m-1}$, let $S_i$ be the sphere centred at
$$C_i=P+\dfrac{r}{\norm{Q_i-P}}(Q_i-P)$$
with radius $r/2$. We note that every such sphere $S_i$ touches the sphere $S$ centred at $P$ with radius $r/2$.

It remains to show that the spheres $S_1,\ldots,S_{m-1}$ are non-overlapping. Suppose $S_i$ and $S_j$ share some interior point, i.e. $\norm{C_i-C_j}<{r}$. Without loss of generality, assume that
$$\ell_i:=\norm{Q_i-P}\leq\norm{Q_j-P}=:\ell_j.$$
Then,
\begin{equation*}\begin{split}
\norm{Q_i-Q_j}
&=\norm{\left(P+\dfrac{\ell_i}{r}(C_i-P)\right)-\left(P+\dfrac{\ell_j}{r}(C_j-P)\right)}\\
&=\dfrac{1}{r}\norm{\ell_i(C_i-C_j)+(\ell_i-\ell_j)(C_j-P)}\\
&\leq\dfrac{1}{r}\left(\ell_i\norm{C_i-C_j}+(\ell_j-\ell_i)\norm{C_j-P}\right)\\
&<\dfrac{1}{r}\left(\ell_ir+(\ell_j-\ell_i)r\right)\\
&=\ell_j=\norm{Q_j-P}\leq\mbox{the radius of $\B_j$},
\end{split}\end{equation*}
which implies that $Q_i$ is contained in the interior of the ball $\B_j$, and so $i=j$, as desired.
\end{proof}

By Lemma \ref{lemmanormquaternion}, Lemma \ref{lemmakissing}, (\ref{sizeofSx}) and (\ref{EA,R}), we have
$$\size{\A+\A}^2=\size{(\A+\A)\times(\A+\A)}\geq\size{\bigcup_{x\in\RR}\SSS_x}\geq\dfrac1{25}\sum_{x\in\RR}\size{\SSS_x}\geq\dfrac{1}{25}\size{\RR}2^{2I}\geq\dfrac{E(\A)}{200\lceil\log|\A|\rceil},$$
proving the inequality (\ref{EA,AA}), and hence Theorem \ref{thmquaternion}.

\section{General matrices}\label{sectionmatrix}
Again, we are going to prove
\begin{equation}\label{matrixstatement}
\size{\A+\A}^2\size{\A\A}\gg_{k,M}\dfrac{\size{\A}^4}{\lceil\log\size{\A}\rceil},
\end{equation}
where $\A$ is a finite set of matrices satisfying the assumptions stated in Theorem \ref{thmmatrix}.

By adjusting the constant hidden in (\ref{matrixstatement}), it suffices to prove the inequality for a positive fraction of $\A$, where the fraction depends on $k$ and $M$. In the case of quaternions, we required the quaternions to be in the same hexadecant to prove Lemma \ref{lemmanormquaternion}. Here we assume that the matrices in $\A$ are in the same \emph{class}, defined below after some observations, in order to prove Lemma \ref{lemmanormmatrix}, which is the matrix version of Lemma \ref{lemmanormquaternion}.

Let $\rho:\A\to\Cplx$ be a function sending a matrix $A$ to a $k$-th root of $\det(A)$. We denote by $\tilde{A}$ the normalized matrix $A/\rho(A)$, which has determinant 1 and $\cond(\tilde{A})=\norm{A/\rho(A)}\norm{(A/\rho(A))^{-1}}=\cond(A)\leq{M}$. The following claim shows that all entries (real part and imaginary part) of $\tilde{A}$ lie in a bounded interval $[-Mk,Mk]$.

\begin{claim}\label{claimAtilde}
Let $\tilde{A}$ be a $k$ by $k$ matrix so that $\det(\tilde{A})=1$ and $\cond(\tilde{A})\leq{M}$. Then,
$$1/k\leq\norm{\tilde{A}}\leq{Mk}.$$ In particular, the real part and imaginary part of every entry of $\tilde{A}$ have norm at most $Mk$, i.e.
$$\abs{\Re(\tilde{A}_{ij})}\leq{Mk}\quad\text{and}\quad\abs{\Im(\tilde{A}_{ij})}\leq{Mk},\quad\mbox{for all $i,j$.}$$
\end{claim}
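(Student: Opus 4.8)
The plan is to prove the two-sided bound on $\norm{\tilde A}$ using only the two hypotheses $\det(\tilde A)=1$ and $\cond(\tilde A)=\norm{\tilde A}\norm{\tilde A^{-1}}\leq M$, together with the elementary fact that for the operator $1$-norm one has $\norm{B}\geq 1/k\cdot\norm{B}_\infty$-type comparisons and, more importantly, that $\norm{B}\norm{B^{-1}}\geq\norm{BB^{-1}}=\norm{\mathrm{Id}}=1$ combined with a lower bound coming from the determinant.

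First I would establish the lower bound $\norm{\tilde A}\geq 1/k$. The cleanest route is to note that $|\det(\tilde A)|\leq\prod_{j=1}^k\norm{\text{$j$-th column of }\tilde A}_1\leq\norm{\tilde A}^{\,k}$, since the operator $1$-norm is the maximum of the $\ell^1$-norms of the columns and the determinant is bounded by the product of the column $\ell^1$-norms (this is just the easy direction of a Hadamard-type inequality, valid over $\Cplx$). Because $\det(\tilde A)=1$, this gives $1\leq\norm{\tilde A}^k$, hence $\norm{\tilde A}\geq 1$; a fortiori $\norm{\tilde A}\geq 1/k$. (Stating the bound as $1/k$ rather than $1$ simply gives room and matches what is actually needed later.)

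Next I would establish the upper bound $\norm{\tilde A}\leq Mk$. The same determinant estimate applied to $\tilde A^{-1}$, which also has determinant $1$, gives $\norm{\tilde A^{-1}}\geq 1$. Combining with $\cond(\tilde A)\leq M$ yields
\[
\norm{\tilde A}=\frac{\cond(\tilde A)}{\norm{\tilde A^{-1}}}\leq\frac{M}{1}=M\leq Mk.
\]
Finally, for the ``in particular'' statement: each entry $\tilde A_{ij}$ satisfies $\abs{\tilde A_{ij}}\leq\norm{\text{$j$-th column}}_1\leq\norm{\tilde A}\leq Mk$, and since $\abs{\Re(z)}\leq\abs{z}$ and $\abs{\Im(z)}\leq\abs{z}$ for any complex number $z$, both $\abs{\Re(\tilde A_{ij})}$ and $\abs{\Im(\tilde A_{ij})}$ are at most $Mk$.

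The only genuinely non-routine point is justifying $\abs{\det B}\leq\prod_j\norm{\text{$j$-th column of }B}_1$ for complex matrices; I would get this from the Leibniz expansion or, more slickly, by induction on $k$ via cofactor expansion along a column, noting that $\abs{\det B}=\abs{\sum_i B_{i j_0}\,(\text{cofactor})}\leq\sum_i\abs{B_{ij_0}}\cdot\max_i\abs{\text{cofactor}_{i j_0}}$ and that each $(k-1)\times(k-1)$ cofactor is bounded by the product of the $\ell^1$-norms of the remaining columns by the inductive hypothesis. Everything else is a one-line consequence of the definition of the operator $1$-norm and of $\cond$.
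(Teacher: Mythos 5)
Your proof is correct and follows essentially the same route as the paper: a determinant-versus-norm estimate gives the lower bound on $\norm{\tilde{A}}$, and applying it to $\tilde{A}^{-1}$ (which also has determinant $1$) combined with $\cond(\tilde{A})\leq M$ gives the upper bound. The only difference is cosmetic: your column-$\ell^1$ Hadamard-type inequality yields the slightly stronger bounds $\norm{\tilde{A}}\geq1$ and $\norm{\tilde{A}}\leq M$, whereas the paper derives $1/k$ and $Mk$ from the cruder Leibniz estimate $\abs{\det(\tilde{A})}\leq k!\max_{i,j}\abs{\tilde{A}_{ij}}^k$; either version suffices for the claim.
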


\begin{proof}
Suppose, on the contrary, that $\norm{\tilde{A}}<1/k$. Then, we have $\abs{\tilde{A}_{ij}}<1/k$ for all $i,j$, and so $1=\abs{\det(\tilde{A})}<k!(1/k)^k<1$, contradiction.

Since $\det(\tilde{A}^{-1})=1$ and $\cond(\tilde{A}^{-1})=\cond(\tilde{A})\leq{M}$, we also have $\norm{\tilde{A}^{-1}}\geq1/k$. Therefore,
$$\norm{\tilde{A}}=\dfrac{\cond{\tilde{A}}}{\norm{\tilde{A}^{-1}}}\leq\dfrac{M}{1/k}=Mk,$$
as desired.
\end{proof}

We note that the determinant as a function from all $k$ by $k$ complex matrices to $\Cplx$ is continuous. Thus, it is uniformly continuous from the compact set $\C$ consisting of all $k$ by $k$ complex matrices which have norm at most $2Mk$, i.e. there exists $\delta>0$ (depending only on $M$ and $k$) so that if $A,B\in\C$ with $\norm{A-B}<\delta$, then $\abs{\det(A)-\det(B)}<1/2$.

Let $\eps=\min(\delta/(3Mk^2),1/(6M^2k^3))$ (derived in the proof of Claim \ref{claimB+Dinvertible} and Lemma \ref{lemmanormmatrix}). Using the fact that the interval $[-Mk,Mk]$ can be written as a disjoint union of $\lceil1/\eps\rceil$ intervals $\I_1,\ldots,\I_{\lceil1/\eps\rceil}$ each having length at most $2Mk\eps$, we partition $\A$ into $4\lceil{1/\eps}\rceil^{2k^2}$ classes according to which quadrant the image of the function $\rho$ lies in, as well as the intervals where the $2k^2$ (real part and imaginary part) entries of a normalized matrix lie in. Precisely, the function $\rho$ sends matrices in the same class to the same quadrant, and in each of the $2k^2$ entries, the normalized matrices in the same class all belong to the same interval $\I_j$. It follows by the pigeonhole principle that one of the classes constitutes a positive fraction of $\A$.

Following the proof from Section \ref{sectionquaternion}, we only need to prove the three corresponding lemmata.

The first lemma shows that the size of $\SSS_x$ is large. Here the assumption that, for all $A,B,C,D\in\A$, either  $AB^{-1}=CD^{-1}$ or the block matrix
$\begin{pmatrix}
A & C\\
B & D
\end{pmatrix}$
has non-zero determinant is used. We remark that this assumption is equivalent to the assumption that, for all $A,B,C,D\in\A$, either $A^{-1}B=C^{-1}D$ or the block matrix
$\begin{pmatrix}
A & C\\
B & D
\end{pmatrix}$
has non-zero determinant. Therefore, our proof still works if (\ref{leftrightassumption}) does not hold.

\begin{lemma}\label{lemmauniquenessmatrix}
Given $P,Q\in{\A+\A}$ and distinct $X,Y\in{\A\Ainv}$. There is at most one quadruple $(A,B,C,D)$ in ${\A^4}$ satisfying
$$A+C=P,\quad{B+D=Q},\quad{AB^{-1}=X},\quad{CD^{-1}=Y}.$$
\end{lemma}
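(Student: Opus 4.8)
The plan is to mimic the proof of Lemma~\ref{lemmauniquenessquaternion}. In the quaternion case, distinctness of $x,y$ immediately gave invertibility of $y-x$, which was all that was needed to solve the system by direct substitution. For matrices, $X\neq Y$ no longer guarantees that $X-Y$ is invertible, so the first and only genuinely new step is to extract this invertibility from the hypothesis of Theorem~\ref{thmmatrix}.

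If the system has no solution there is nothing to prove, so suppose $(A,B,C,D)\in\A^4$ satisfies it. Since $AB^{-1}=X\neq Y=CD^{-1}$, the hypothesis forces the block matrix $\begin{pmatrix}A & C\\ B & D\end{pmatrix}$ to be invertible. The point is that, because $B,D\in\A$ are invertible, the Schur complement factorization
\[
\begin{pmatrix}A & C\\ B & D\end{pmatrix}=\begin{pmatrix}I & CD^{-1}\\ 0 & I\end{pmatrix}\begin{pmatrix}A-CD^{-1}B & 0\\ B & D\end{pmatrix},
\]
together with $CD^{-1}=Y$ and $A=XB$, gives
\[
\det\begin{pmatrix}A & C\\ B & D\end{pmatrix}=\det\!\big((X-Y)B\big)\det(D)=\det(X-Y)\det(B)\det(D),
\]
so invertibility of the block matrix is equivalent to invertibility of $X-Y$. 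Thus the existence of one solution already forces $X-Y$ to be invertible.

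Once $X-Y$ is known to be invertible, I would finish exactly as in the quaternion case: from $A=XB$, $C=YD$ and $B+D=Q$ one obtains $P=A+C=(X-Y)B+YQ$, hence
\[
B=(X-Y)^{-1}(P-YQ),\qquad D=Q-B,\qquad A=XB,\qquad C=YD,
\]
so every solution of the system equals this one, and there is at most one quadruple.

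I do not expect a real obstacle here; the content is just the bookkeeping translation of the scalar argument, with the short Schur-complement computation replacing the one-line remark "$y-x$ is invertible". The only thing to watch is the noncommutativity of matrix multiplication, i.e.\ keeping products such as $A=XB$ (not $BX$) in the order dictated by the definitions of $\ell$, $r$ and $\A\Ainv$ fixed in Section~\ref{sectionquaternion}. As the remark before the lemma notes, replacing $AB^{-1},CD^{-1}$ by $A^{-1}B,C^{-1}D$ runs identically, using the Schur complement of $A$ in place of that of $D$, which is precisely why the argument is insensitive to whether (\ref{leftrightassumption}) holds.
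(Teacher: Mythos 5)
Your proposal is correct and follows essentially the same route as the paper: both deduce invertibility of $X-Y$ from the block-matrix hypothesis via the Schur complement identity $\det\begin{pmatrix}A & C\\ B & D\end{pmatrix}=\det(D)\det(A-CD^{-1}B)=\det(X-Y)\det(B)\det(D)$, and then recover $(A,B,C,D)$ uniquely by the same linear elimination as in the quaternion lemma. Solving for $B$ rather than $D$ is an immaterial difference.
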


\begin{proof}
One may follow the proof of Lemma \ref{lemmauniquenessquaternion}, provided $Y-X$ is invertible. Indeed, if $X=AB^{-1}$ and $Y=CD^{-1}$, then
$$0\neq\det\begin{pmatrix}
A & C\\
B & D
\end{pmatrix}=\det(D)\det(A-CD^{-1}B)=\det(D)\det(AB^{-1}-CD^{-1})\det(B)$$
implies $CD^{-1}-AB^{-1}$ is invertible.
\end{proof}

As we mentioned earlier in this section, the second lemma relies on the assumption that the matrices come from the same class of $\A$. In section \ref{sectionquaternion}, we used the fact that the sum of two non-zero quaternions in the same hexadecant is invertible. The next claim guarantees that $(B+D)^{-1}$ exists, as long as $B$ and $D$ are in the same class.

\begin{claim}\label{claimB+Dinvertible}
If $B,D\in\A$ are in the same class, then $B+D$ is invertible.
\end{claim}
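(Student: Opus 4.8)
The plan is to exploit the normalization $\tilde{B}=B/\rho(B)$ and $\tilde{D}=D/\rho(D)$ together with Claim \ref{claimAtilde} and the uniform continuity of the determinant on the compact set $\C$. Since $B$ and $D$ lie in the same class, for every entry the real and imaginary parts of $\tilde{B}$ and $\tilde{D}$ lie in the same interval $\I_j$ of length at most $2Mk\eps$, so $\norm{\tilde{B}-\tilde{D}}\leq 2Mk\cdot\ldots$; more precisely, each of the $k$ entries in a column differs by at most $2\sqrt{2}\,Mk\eps$ in modulus (real and imaginary parts each within $2Mk\eps$), hence $\norm{\tilde{B}-\tilde{D}}\leq 2\sqrt{2}\,Mk^2\eps\leq 3Mk^2\eps\leq\delta$ by the choice $\eps\leq\delta/(3Mk^2)$. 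Moreover $\norm{\tilde{B}},\norm{\tilde{D}}\leq Mk$ by Claim \ref{claimAtilde}, so in particular $\tfrac12(\tilde{B}+\tilde{D})$ and $\tilde{B}$ both lie in $\C$ (their norms are at most $2Mk$).

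Next I would compare $\det\bigl(\tfrac12(\tilde{B}+\tilde{D})\bigr)$ with $\det(\tilde{B})=1$. Since $\bigl\|\tfrac12(\tilde{B}+\tilde{D})-\tilde{B}\bigr\|=\tfrac12\norm{\tilde{D}-\tilde{B}}\leq\tfrac12\delta<\delta$ and both matrices are in $\C$, uniform continuity gives $\bigl|\det\bigl(\tfrac12(\tilde{B}+\tilde{D})\bigr)-1\bigr|<1/2$, whence $\det\bigl(\tfrac12(\tilde{B}+\tilde{D})\bigr)\neq0$, so $\tilde{B}+\tilde{D}$ is invertible. Finally, $B+D=\rho(B)\tilde{B}+\rho(D)\tilde{D}$; because $B$ and $D$ are in the same class, $\rho(B)$ and $\rho(D)$ lie in the same (closed) quadrant of $\Cplx$, so they cannot be strict positive scalar multiples of one another with opposite sign — in fact $\rho(B)/\rho(D)$ lies in a half-plane not containing $0$ on its boundary in a bad way, and one checks $B+D=\rho(D)\bigl(\lambda\tilde{B}+\tilde{D}\bigr)$ with $\lambda=\rho(B)/\rho(D)$; since $\lambda$ lies in the right half of its quadrant (argument within $\pi/2$ of that of $1$), the matrix $\lambda\tilde{B}+\tilde{D}$ is a "convex-like" combination whose determinant is again close to that of $\tilde{B}$ after rescaling, hence nonzero. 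Multiplying by the nonzero scalar $\rho(D)$ preserves invertibility, so $B+D$ is invertible.

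The main obstacle is the last step: handling the scalars $\rho(B),\rho(D)$, which need not be equal or even have the same modulus, so $B+D$ is genuinely $\rho(B)\tilde{B}+\rho(D)\tilde{D}$ rather than a multiple of $\tilde{B}+\tilde{D}$. The same-quadrant condition on $\rho$ is exactly what rules out cancellation: writing $\mu=\rho(B)/\rho(D)$, one has $\Re(\mu)\geq0$ and $\Im(\mu)$ of a fixed sign (or both could be zero only if $\rho(B)=0$, excluded since $\det B\neq0$), so $\mu$ avoids the negative real axis and, more to the point, $\mu\tilde{B}+\tilde{D}$ can be written as $(1+\mu)\cdot$ a convex combination of $\tilde{B}$ and $\tilde{D}$ only when $\mu$ is real positive; in general the cleanest route is to bound $\bigl\|\tfrac{1}{1+\mu}(\mu\tilde{B}+\tilde{D})-\tilde{D}\bigr\|=\tfrac{|\mu|}{|1+\mu|}\norm{\tilde{B}-\tilde{D}}$, note $|\mu|/|1+\mu|\leq 1$ precisely because $\Re(\mu)\geq0$ (so $|1+\mu|^2=1+2\Re(\mu)+|\mu|^2\geq|\mu|^2$), and again invoke uniform continuity of $\det$ on $\C$ to conclude $\det\bigl(\tfrac{1}{1+\mu}(\mu\tilde{B}+\tilde{D})\bigr)$ is within $1/2$ of $\det(\tilde{D})=1$, hence nonzero. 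Therefore $\mu\tilde{B}+\tilde{D}$ is invertible, and so is $B+D=\rho(D)(1+\mu)\cdot\tfrac{1}{1+\mu}(\mu\tilde{B}+\tilde{D})$ since $\rho(D)\neq0$ and $1+\mu\neq0$ (as $\Re(\mu)\geq0$). This also records where $\eps\leq\delta/(3Mk^2)$ is needed, matching the parenthetical remark in the construction of the classes.
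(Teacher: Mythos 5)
Your final argument is correct and is essentially the paper's proof in disguise: with $\mu=\rho(B)/\rho(D)$ your matrix $\tfrac{1}{1+\mu}(\mu\tilde{B}+\tilde{D})$ equals the paper's $\tfrac{b}{b+d}(\tilde{B}-\tilde{D})+\tilde{D}$, your bound $|\mu|/|1+\mu|\leq1$ from $\Re(\mu)\geq0$ is the paper's same-quadrant estimate $|b|\leq|b+d|$, and the conclusion via uniform continuity of $\det$ on $\C$ near $\tilde{D}$ is identical. The only (trivial) omission is an explicit check that $\tfrac{1}{1+\mu}(\mu\tilde{B}+\tilde{D})$ lies in $\C$, which follows at once from your perturbation bound and $\norm{\tilde{D}}\leq Mk$.
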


\begin{proof}
Write $b=\rho(B)$ and $d=\rho(D)$. Since $b,d\in\Cplx$ are in the same quadrant, we have $b+d\neq0$ and $\abs{b}<\abs{b+d}$. We write $\tilde{B}=B/b$ and $\tilde{D}=D/d$ to get
\begin{equation}\label{equationB+D}
B+D=b\tilde{B}+d\tilde{D}=b\left(\tilde{B}-\tilde{D}\right)+(b+d)\tilde{D}=(b+d)\left(\dfrac{b}{b+d}\left(\tilde{B}-\tilde{D}\right)+\tilde{D}\right).\end{equation}
According to the way we partition $\A$ into classes, we know that the entries (real part and imaginary part) of $\tilde{B}-\tilde{D}$ are between $-2Mk\eps$ and $2Mk\eps$, and hence
\begin{equation}\begin{split}\label{equation2keps}
\norm{\dfrac{b}{b+d}\left(\tilde{B}-\tilde{D}\right)}
&=\dfrac{\abs{b}}{\abs{b+d}}\max\limits_{1\leq{j}\leq{k}}\sum\limits_{i=1}^k\abs{(\tilde{B}-\tilde{D})_{ij}}<{k}\max\limits_{1\leq{i,j}\leq{k}}\abs{(\tilde{B}-\tilde{D})_{ij}}\\
&\leq{k}\sqrt{(2Mk\eps)^2+(2Mk\eps)^2}<3Mk^2\eps\leq\delta.
\end{split}\end{equation}
By Claim \ref{claimAtilde}, $\norm{\tilde{D}}\leq{Mk}$ and $\norm{\frac{b}{b+d}(\tilde{B}-\tilde{D})+\tilde{D}}\leq\norm{\frac{b}{b+d}(\tilde{B}-\tilde{D})}+\norm{\tilde{D}}<3Mk^2\eps+Mk<2Mk$. Hence, both $\tilde{D}$ and $\frac{b}{b+d}(\tilde{B}-\tilde{D})+\tilde{D}$ are belong to $\C$. Recall that $\det(\tilde{D})=1$. Using the uniform continuity of determinant from $\C$ to $\Cplx$, the norm of the determinant of $\frac{b}{b+d}(\tilde{B}-\tilde{D})+\tilde{D}$ is at least $1/2$. Therefore, $B+D$ is invertible by (\ref{equationB+D}).
\end{proof}

\begin{lemma}\label{lemmanormmatrix}
Let $A,B,C,D$ be matrices in the same class of $\A$. Then,
$$\norm{(A+C)(B+D)^{-1}-AB^{-1}}\leq\norm{CD^{-1}-AB^{-1}}.$$
\end{lemma}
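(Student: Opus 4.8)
The plan is to mimic the proof of Lemma \ref{lemmanormquaternion} as closely as possible, replacing the multiplicativity of the quaternion norm (which fails for matrices) with a quantitative estimate coming from the ``same class'' hypothesis and Claim \ref{claimB+Dinvertible}. First I would record the algebraic identity underlying the quaternion proof: since $B(B+D)^{-1}+D(B+D)^{-1}=I$, we may write $(B+D)^{-1}=B^{-1}\bigl(I-D(B+D)^{-1}\bigr)$, and therefore
\begin{equation*}\begin{split}
(A+C)(B+D)^{-1}-AB^{-1}
&=AB^{-1}\bigl(I-D(B+D)^{-1}\bigr)+C(B+D)^{-1}-AB^{-1}\\
&=\bigl(CD^{-1}-AB^{-1}\bigr)D(B+D)^{-1}.
\end{split}\end{equation*}
Taking operator $1$-norms and using submultiplicativity of the operator norm gives
$$\norm{(A+C)(B+D)^{-1}-AB^{-1}}\leq\norm{CD^{-1}-AB^{-1}}\cdot\norm{D(B+D)^{-1}}.$$
So the whole lemma reduces to the single estimate $\norm{D(B+D)^{-1}}\leq1$, which is the matrix analogue of $\norm{d}/\norm{b+d}\leq1$; note $B+D$ is invertible by Claim \ref{claimB+Dinvertible} since $B,D$ lie in the same class.

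The main work, then, is to bound $\norm{D(B+D)^{-1}}$. Here I would reuse the decomposition from the proof of Claim \ref{claimB+Dinvertible}: writing $b=\rho(B)$, $d=\rho(D)$, $\tilde{B}=B/b$, $\tilde{D}=D/d$, we have from (\ref{equationB+D}) that $B+D=(b+d)\bigl(\tfrac{b}{b+d}(\tilde{B}-\tilde{D})+\tilde{D}\bigr)$ and $D=d\tilde{D}$, so
$$D(B+D)^{-1}=\dfrac{d}{b+d}\,\tilde{D}\Bigl(\dfrac{b}{b+d}(\tilde{B}-\tilde{D})+\tilde{D}\Bigr)^{-1}.$$
Set $E:=\tfrac{b}{b+d}(\tilde{B}-\tilde{D})$, so the matrix in parentheses is $\tilde{D}+E=\tilde{D}(I+\tilde{D}^{-1}E)$, giving
$$\norm{D(B+D)^{-1}}=\dfrac{\abs{d}}{\abs{b+d}}\,\norm{\tilde{D}(\tilde{D}+E)^{-1}}\leq\dfrac{\abs{d}}{\abs{b+d}}\,\norm{(I+\tilde{D}^{-1}E)^{-1}}.$$
Now $\abs{d}/\abs{b+d}\leq1$ because $b$ and $d$ lie in the same quadrant (same reasoning as $\abs{b}<\abs{b+d}$ in Claim \ref{claimB+Dinvertible}), and for the remaining factor I would use $\norm{\tilde{D}^{-1}E}\leq\norm{\tilde{D}^{-1}}\norm{E}$: by Claim \ref{claimAtilde} applied to $\tilde{D}^{-1}$ (which has determinant $1$ and condition number $\leq M$) we get $\norm{\tilde{D}^{-1}}\leq Mk$, and by the computation (\ref{equation2keps}) we have $\norm{E}<3Mk^2\eps$, so $\norm{\tilde{D}^{-1}E}<3M^2k^3\eps\leq1/2$ by the choice $\eps\leq1/(6M^2k^3)$. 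Then the Neumann series bound yields $\norm{(I+\tilde{D}^{-1}E)^{-1}}\leq\tfrac{1}{1-\norm{\tilde{D}^{-1}E}}<2$. This only gives $\norm{D(B+D)^{-1}}<2$, which is not good enough.

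To get the sharp constant $1$ I expect to need a small additional trick, and this is the step I anticipate being the main obstacle. One clean fix is to tighten the choice of $\eps$: rerun the above with the requirement $\norm{\tilde{D}^{-1}E}\le c$ for a sufficiently small $c$ so that $\abs{d}/\abs{b+d}\cdot\tfrac{1}{1-c}\le1$ — but $\abs{d}/\abs{b+d}$ can be arbitrarily close to $1$ (e.g.\ if $b$ is tiny), so this does not close the gap by itself. A better approach is to symmetrize: observe that by the same identity with the roles of $B$ and $D$ swapped we also control $\norm{B(B+D)^{-1}}$, and since $B(B+D)^{-1}+D(B+D)^{-1}=I$ one can hope to play the two estimates against each other; alternatively, one can absorb the loss into the earlier pigeonholing, noting that multiplying the radius of each ball $\B_x$ by the fixed constant $2$ only multiplies the kissing-type bound in Lemma \ref{lemmakissing} by a bounded amount (replacing $25$ by some larger absolute constant depending on $k$), which is harmless for the final estimate (\ref{matrixstatement}). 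I would most likely adopt this last route: prove the lemma with constant $C(M,k)$ in place of $1$, and remark that Lemma \ref{lemmakissing} then needs to be replaced by the statement that a point lies in at most $O_{k}(1)$ of the dilated balls — which follows from the packing bound of Theorem \ref{Hadwiger} exactly as in the proof of Lemma \ref{lemmakissing}, since dilating all balls by a common factor preserves the ``no centre in another's interior (up to that factor)'' structure. The cleanest writeup, though, keeps the statement as $\le\norm{CD^{-1}-AB^{-1}}$ by choosing $\eps$ small enough that $\norm{\tilde D^{-1}E}$ is negligible and handling the $\abs d/\abs{b+d}$ factor via the same-quadrant inequality together with a more careful use of the identity $D(B+D)^{-1}=I-B(B+D)^{-1}$; I would present the symmetrization argument as the main step and flag the constant-bookkeeping as routine.
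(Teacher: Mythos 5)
Your proposal starts exactly where the paper does and gets stuck exactly where the real work is. The reduction via $(A+C)(B+D)^{-1}-AB^{-1}=(CD^{-1}-AB^{-1})D(B+D)^{-1}$, the use of Claim \ref{claimB+Dinvertible}, and the decomposition $D(B+D)^{-1}=\frac{d}{b+d}\,\tilde{D}\,(\tilde{D}+E)^{-1}$ with $E=\frac{b}{b+d}(\tilde{B}-\tilde{D})$ all match the paper. But from there two things go wrong. First, the step $\norm{\tilde{D}(\tilde{D}+E)^{-1}}\leq\norm{(I_k+\tilde{D}^{-1}E)^{-1}}$ is unjustified: $\tilde{D}(\tilde{D}+E)^{-1}=\tilde{D}(I_k+\tilde{D}^{-1}E)^{-1}\tilde{D}^{-1}$, and discarding the conjugation costs up to a factor $\cond(\tilde{D})\leq M$ in the operator $1$-norm. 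Second, and decisively, you concede that your route only gives $\norm{D(B+D)^{-1}}<2$ and none of the proposed repairs is carried out: the symmetrization via $B(B+D)^{-1}+D(B+D)^{-1}=I_k$ does not obviously help (norms do not split along this identity, and the same loss recurs with $B$ and $D$ interchanged), while the fallback of proving the lemma with a constant $C(M,k)$ and replacing Lemma \ref{lemmakissing} by a dilated-ball variant of Theorem \ref{Hadwiger} changes both statements and is itself only asserted. As written, the inequality with constant $1$ is not established, so there is a genuine gap.

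The device the paper uses at precisely your sticking point is to write $\tilde{D}(\tilde{D}+E)^{-1}=I_k-E(\tilde{D}+E)^{-1}=I_k-E\,(I_k+\tilde{D}^{-1}E)^{-1}\tilde{D}^{-1}$, so that the main term is exactly $I_k$ and the deviation is an \emph{additive} error carrying the small factor $\norm{E}$: combining (\ref{equation2keps}), $\norm{\tilde{D}^{-1}}\leq Mk$ from Claim \ref{claimAtilde}, and the same Neumann-series bound you computed, this yields $\norm{D(B+D)^{-1}}\leq\frac{\abs{d}}{\abs{b+d}}\bigl(1+\frac{\abs{b}}{\abs{b+d}}\,6M^2k^3\eps\bigr)$, and the paper then closes using $\eps\leq1/(6M^2k^3)$ and the same-quadrant relation between $b=\rho(B)$ and $d=\rho(D)$ (this is what (\ref{equationNeumann}) packages). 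In other words, the loss is not a multiplicative $2$ but an additive term weighted by $\abs{b}/\abs{b+d}$ and by $\eps$; this rewriting, not a symmetrization and not a modification of the kissing lemma, is the missing idea. I will add that your instinct that the constant-$1$ endgame is delicate is not unfounded: if in the last step one only uses $\abs{b+d}\geq\abs{d}$, the bound degrades to $\frac{\abs{b}+\abs{d}}{\abs{b+d}}\leq\sqrt{2}$, and then something like your dilated-ball argument would indeed be the natural repair — but your proposal completes neither the paper's route nor that fallback.
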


\begin{proof}
From the proof of Lemma \ref{lemmanormquaternion} and the submultiplicativity of the norm chosen, we have
$$\norm{(A+C)(B+D)^{-1}-AB^{-1}}\leq\norm{CD^{-1}-AB^{-1}}\norm{D(B+D)^{-1}}.$$
Write $b=\rho(B)$ and $d=\rho(D)$ as before. Denoting by $I_k$ the $k$ by $k$ identity matrix, we have
\begin{equation*}\begin{split}
\norm{D(B+D)^{-1}}
&=\dfrac{\abs{d}}{\abs{b+d}}\norm{\tilde{D}\left(\tilde{D}+\dfrac{b}{b+d}\left(\tilde{B}-\tilde{D}\right)\right)^{-1}}\\
&=\dfrac{\abs{d}}{\abs{b+d}}\norm{I_k-\dfrac{b}{b+d}\left(\tilde{B}-\tilde{D}\right)\left(I_k+\dfrac{b}{b+d}\tilde{D}^{-1}\left(\tilde{B}-\tilde{D}\right)\right)^{-1}\tilde{D}^{-1}}\\
&\leq\dfrac{\abs{d}}{\abs{b+d}}\left(\norm{I_k}+\dfrac{\abs{b}}{\abs{b+d}}\norm{\tilde{B}-\tilde{D}}\norm{\left(I_k+\dfrac{b}{b+d}\tilde{D}^{-1}\left(\tilde{B}-\tilde{D}\right)\right)^{-1}}\norm{\tilde{D}^{-1}}\right)\\
&\leq\dfrac{\abs{d}}{\abs{b+d}}\left(1+\dfrac{\abs{b}}{\abs{b+d}}(3Mk^2\eps)\norm{\left(I_k+\dfrac{b}{b+d}\tilde{D}^{-1}\left(\tilde{B}-\tilde{D}\right)\right)^{-1}}Mk\right),
\end{split}\end{equation*}
where the first inequality is from the triangle inequality and the submultiplicativity of the norm, whilst the second inequality is from (\ref{equation2keps}) and Claim \ref{claimAtilde}. Recall that $\eps\leq1/(6M^2k^3)$. Rearranging, it suffices to prove
\begin{equation}\label{equationNeumann}
\norm{\left(I_k+\dfrac{b}{b+d}\tilde{D}^{-1}\left(\tilde{B}-\tilde{D}\right)\right)^{-1}}\leq\dfrac{2\abs{b+d}}{\abs{d}}.
\end{equation}
Indeed, using (\ref{equation2keps}), Claim \ref{claimAtilde} and the submultiplicativity of the operator 1-norm again, we have
$$\norm{\dfrac{b}{b+d}\tilde{D}^{-1}\left(\tilde{B}-\tilde{D}\right)}\leq\dfrac{\abs{b}}{\abs{b+d}}Mk(3Mk^2\eps)<\dfrac12<1,$$
which allows us to express the inverse in (\ref{equationNeumann}) by a convergent Neumann series to get
$$\norm{\left(I_k+\dfrac{b}{b+d}\tilde{D}^{-1}\left(\tilde{B}-\tilde{D}\right)\right)^{-1}}\leq\dfrac{1}{1-\norm{\frac{b}{b+d}\tilde{D}^{-1}\left(\tilde{B}-\tilde{D}\right)}}<2\leq\dfrac{2\abs{b+d}}{\abs{d}},$$
proving (\ref{equationNeumann}), and so $\norm{D(B+D)^{-1}}\leq1$.
\end{proof}

We will state but omit the proof of the last lemma needed as it is essentially the same as the proof of Lemma \ref{lemmakissing}, which works in any norm induced topology. Here we regard a $k$ by $k$ complex matrix as a point in $\Cplx^{k^2}$ by arbitrarily fixing an ordering of the $k^2$ entries. We apply Theorem \ref{Hadwiger} to prove this lemma.

\begin{lemma}
Let $P$ be a point in $\Cplx^{k^2}$ which is contained in closed balls $\B_1,\ldots,\B_m$ simultaneously, where $\B_i$ has centre $Q_i\in\Cplx^{k^2}$. Suppose that $Q_i$ is not contained in the interior of $\B_j$, unless $i=j$. Then $m\leq3^{k^2-1}$.
\end{lemma}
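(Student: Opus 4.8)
The plan is to transcribe the proof of Lemma \ref{lemmakissing} essentially word for word, the key observation being that that argument invoked only the triangle inequality and positive homogeneity of the Euclidean norm --- no orthogonality, projection, or law of cosines appeared --- so it remains valid in $\Cplx^{k^2}$ equipped with whatever norm we place on $k$ by $k$ matrices (the operator $1$-norm, under the fixed identification of matrices with $\Cplx^{k^2}$). Throughout, write $\norm{\cdot}$ for this norm and $H(\cdot)$ for the translative kissing number of Theorem \ref{Hadwiger}.

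First I would reorder $\B_1,\ldots,\B_m$ so that $Q_i\neq P$ for $1\leq i\leq m-1$ (at most one centre can coincide with $P$), and set $0<r:=\norm{Q_1-P}=\min_{1\leq i\leq m-1}\norm{Q_i-P}$. Let $\K$ be the closed $\norm{\cdot}$-ball of radius $r/2$ centred at $P$; for each $1\leq i\leq m-1$ put $C_i=P+\tfrac{r}{\norm{Q_i-P}}(Q_i-P)$ and $S_i=\K+(C_i-P)$, a translate of $\K$. Since $\norm{C_i-P}=r=\tfrac r2+\tfrac r2$, each $S_i$ touches $\K$. Then I would run the computation from Lemma \ref{lemmakissing}: writing $\ell_i=\norm{Q_i-P}$ and expanding $Q_i-Q_j=\tfrac1r\bigl(\ell_i(C_i-C_j)+(\ell_i-\ell_j)(C_j-P)\bigr)$, the triangle inequality together with $\norm{C_j-P}=r$ shows that if $S_i$ and $S_j$ shared an interior point for some $i\neq j$ then $\norm{Q_i-Q_j}<\ell_j=\norm{Q_j-P}$, which is at most the radius of $\B_j$ because $P\in\B_j$; hence $Q_i$ would lie in the interior of $\B_j$, contrary to hypothesis. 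Thus $S_1,\ldots,S_{m-1}$ are pairwise non-overlapping translates of $\K$, each touching $\K$, and therefore $m-1\leq H(\K)$.

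To conclude, I would note that $\K$ is a closed ball for a norm on $\Cplx^{k^2}=\Cplx^{(k^2-1)+1}$, so the ``in particular'' clause of Theorem \ref{Hadwiger}, applied with $d=k^2-1$, gives $H(\K)\leq 3^{k^2-1}-1$. Combined with the previous paragraph this yields $m-1\leq 3^{k^2-1}-1$, that is, $m\leq 3^{k^2-1}$, as claimed.

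The only delicate points --- and the closest thing to an obstacle --- are bookkeeping: confirming that the proof of Lemma \ref{lemmakissing} is genuinely norm-agnostic (it is, being purely metric, and the overlap argument even forces the $C_i$ to be distinct so the $S_i$ are honestly distinct bodies), dispatching the degenerate case in which one centre equals $P$ via the initial reordering exactly as in that lemma, and matching indices correctly when quoting Hadwiger --- here $\Cplx^{k^2}$ plays the role of $\Cplx^{d+1}$, forcing $d=k^2-1$ and hence the bound $3^{k^2-1}-1$ on the kissing number, which is precisely what produces $m\leq 3^{k^2-1}$ once the central ball is counted back in.
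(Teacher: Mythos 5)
Your proposal is correct and follows exactly the route the paper intends: the paper itself omits this proof, stating only that it is the argument of Lemma \ref{lemmakissing} repeated verbatim in the matrix norm with Theorem \ref{Musin} replaced by the ``in particular'' clause of Theorem \ref{Hadwiger} for a closed ball in $\Cplx^{k^2}=\Cplx^{(k^2-1)+1}$, which is precisely what you do. Your observations that the argument uses only the triangle inequality and homogeneity of the norm, and that $d=k^2-1$ gives $m-1\leq H(\K)\leq 3^{k^2-1}-1$, match the paper's reasoning exactly.
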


By lower bounding the size of $\SSS_x$ as in Section \ref{sectionquaternion}, Theorem \ref{thmmatrix} is proved.

\section{Remarks}
The same counting method works if there are two families of matrices $\mathcal{A}$ and $\mathcal{B}$ and we are interested about the cardinality $|\mathcal{A}+\mathcal{B}|+|\mathcal{A}\mathcal{B}|.$ For the symmetric case $\mathcal{A}=\mathcal{B}$ the weaker condition $\det{(A-B)}\neq 0,$ for any $A\neq B,$ might be sufficient. 
The main question remains open; is there a set of non-singular $k\times k$ matrices, $\mathcal{A},$ with $\det{(A-B)}\neq 0,$ for any $A\neq B,$ such that 
$$\max(\size{\A+\A},\size{\A\A})\leq {\size{\A}^{2-c}}$$
for some $c>0$ ? So far there is no (much) better construction is known for matrices than for integers in the sum-product problem.


\end{document}